\definecolor{r}{rgb}{0.9,0.3,0.1}
\definecolor{b}{rgb}{0.1,0.3,0.9}
\newtheorem{theo}{Theorem}[section]
\newtheorem{defin}[theo]{Definition}
\newtheorem{prop}[theo]{Proposition}
\newtheorem{lemm}[theo]{Lemma}
\newtheorem{rem}[theo]{Remark}
\newcommand{\al}{\alpha}
\newcommand{\Ga}{\Gamma}
\newcommand{\Om}{\Omega}
\newcommand{\De}{\Delta}
\newcommand{\de}{\delta}
\newcommand{\pa}{\partial}
\newcommand{\R}{{\bf R}^n}
\newcommand{\ri}{\rightarrow}
\begin{document}
\baselineskip=18pt

\title[Fractional Laplace]{A Dirichlet problem of the fractional Laplace equation 
in the bounded Lipschitz domain}
\author{TongKeun Chang}
\address{Department of Mathematics, yonsei University, Seoul, South Korea}
\email{chang7357@yonsei.ac.kr}


\begin{abstract}
In this paper, we study a Dirichlet problem of a fractional Laplace
equation in a bounded Lipschitz domain in $ \R, \,\, n \geq 2$. Our
main result is that for the given data $F \in \dot H^s(\Om^c), \,\,
0 < s<1$, we find the function which satisfies that $\De^s u =0$ in
$\Om$,
 $u|_{\Om^c} =F$ and $\| u\|_{\dot{H}^s(\R)} \leq c \| F\|_{\dot H^s(\Om^c)}$. Furthermore, we represent the
 solution with an integral operator.\\

 {\bf Mathematics Subject Classification (2010)}. Primary 45P05;
Secondary 30E25.\\
\\
{\bf  Keywords}.  Fractional Laplace equation, Dirichlet problem,
integral operator, bounded Lipschitz domain.
\end{abstract}

\maketitle

\section{Introduction}
\setcounter{equation}{0}
In this paper,  we study the Dirichlet problem of a fractional Laplace equation
for the bounded Lipschitz domain $\Om$ in $\R, \,\, n \geq 2$. For $0< s < 1$,
 we define the fractional Laplacian of a function $u: \R \ri {\bf R}$ as
\begin{align}\label{definition2}
\De^s u(x) : = c(n,s) \int_{\R} \frac{ u(x+y) - 2 u(x) + u(x-y)}{|y|^{n + 2s}} dy,
\end{align}
where $c(n,s)$ is some normalization constant.
The fractional Laplacian of $u$ also can be defined as a pseudo-differential operator
\begin{align}\label{definition3}
\widehat{(-\De)^s  u}(\xi) = (2\pi  | \xi|)^{2s} \hat{u} (\xi),
\end{align}
where $\hat{ u}(\xi) :=\int_{\R} u(x) e^{-2\pi i \xi \cdot x } dx, \,\, \xi \in \R$
is the Fourier transform of $u$ in $\R$.

Compared with the classical Laplacian of $u$, $\De u = \sum_{1 \leq
i \leq n} \frac{\pa^2 u}{\pa x_i \pa x_i} $ is a local property but
the fractional Laplacian of $u$ ( \eqref{definition2} or
\eqref{definition3}) is a non-local property. That is, to define the
classical Laplacian of $u$ at $x\in \R$, we only need the
information of $u$ in the neighborhood of $x$, but to define the
fractional Laplacian of $u$ at $x$, we also need  the information in
$\R$. So, to establish the  Dirichlet problem of the fractional
Laplace equation in $\Om$, we need the condition which is defined in
$\Om^c$.

We introduce the Dirichlet problem of a fractional Laplace equation
in the bounded Lipschitz domain; given the function $F$ defined in
$\Om^c$, we find the function $u$ satisfying the following equation
\begin{align}\label{main equation}
\left\{\begin{array}{l} \vspace{2mm}
\De^s  u(x)  =0  \qquad  x \in \Om,\\
  u(x) = F(x) \qquad x \in \Om^c,
\end{array}
\right.
\end{align}

The  probability theory is a good  tool to represent the solution of \eqref{main equation}.
Let $X_t$ be a $2s$-stable process in
$\R$ and $\tau_\Om = \inf\{ t> 0\, | \, X_t \notin \Om\}$, the first exit time
 of $X_t$ in $\Om$. Note that
$X_t$ is right continuous and has left limits $a.s.$ Furthermore
$P\{\tau_\Om \in \pa \Om\} =0$  (see \cite{B} and \cite{CS}) and so
$X_{\tau_\Om} \in \Om^c, \, a.s$. Let  $F \in C^\infty(\Om^c)$ with
$\int_{\Om^c} |F(x)|(1 + |x|)^{-n -2s} dx < \infty$ and we define
the function
\begin{align}\label{probability solution}
 u(x) = \left\{ \begin{array}{ll}
E_x F(X_{\tau_\Om}) & \quad x \in \Om, \\
F(x) & \quad x \in \Om^c,
\end{array}
\right.
\end{align}
where $E_x$ denotes an expectation with respect to $P^x$  of the
process starting from $x \in \Om$. Then,  $u$ defined in
\eqref{probability solution}
 is the solution of \eqref{main equation}.
Moreover, there is a Poisson kernel $K(x,y)$ defined in $\Om \times
\Om^c$ such that
\begin{align}\label{potentail0313}
u(x) = \int_{\Om^c} K(x,y) F(y)  dy.
\end{align}
(see \cite{B} and \cite{CS}).

In this paper, we study the regularity problem of the equation
\eqref{main equation}. Our main result is the following theorem.
\begin{theo}\label{theo0229}
Let $\Om$ be a bounded Lipschitz domain in $\R$ and $0 < s <1$. Then, given $F \in \dot{H}^s(\Om^c)$,
there is the unique weak solution $u \in \dot{H}^s(\R)$ of the equation \eqref{main equation} such that
\begin{align}\label{homo-estimate}
\| u\|_{\dot H^s(\R)} \leq c \| F\|_{\dot H^s (\Om^c)}
\end{align}
for some positive constant independent of $F$.
\end{theo}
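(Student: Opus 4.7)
The plan is to reduce \eqref{main equation} to a variational problem on a closed subspace of $\dot H^s(\R)$, using the Gagliardo characterization
\[
\|u\|_{\dot H^s(\R)}^2 \;\sim\; \iint_{\R\times\R}\frac{|u(x)-u(y)|^2}{|x-y|^{n+2s}}\,dx\,dy,
\]
together with an extension theorem to lift $F$ from $\Om^c$ to all of $\R$.

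\emph{Step 1: Extend the data.} First I would invoke an extension operator $E:\dot H^s(\Om^c)\to\dot H^s(\R)$ with $(EF)|_{\Om^c}=F$ and $\|EF\|_{\dot H^s(\R)}\le c\|F\|_{\dot H^s(\Om^c)}$. Since $\Om$ is bounded Lipschitz, $\pa\Om^c=\pa\Om$ is a compact Lipschitz hypersurface, so a Calder\'on/Stein-type extension of fractional order is available on $\Om^c$. Write $\ti F:=EF$.

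\emph{Step 2: Variational setup.} For $u,\varphi\in\dot H^s(\R)$ define
\[
\mathcal E(u,\varphi):=\iint_{\R\times\R}\frac{(u(x)-u(y))(\varphi(x)-\varphi(y))}{|x-y|^{n+2s}}\,dx\,dy,
\]
so $\mathcal E(u,u)\simeq\|u\|_{\dot H^s(\R)}^2$. Let
\[
V:=\{v\in\dot H^s(\R)\,:\,v=0\text{ a.e.\ in }\Om^c\},
\]
a closed subspace of $\dot H^s(\R)$. Seeking the solution in the form $u=\ti F+v$ with $v\in V$ automatically secures $u|_{\Om^c}=F$, and the weak form of $\De^s u=0$ in $\Om$ becomes
\[
\mathcal E(v,\varphi)=-\mathcal E(\ti F,\varphi)\qquad\forall\,\varphi\in V.
\]

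\emph{Step 3: Apply Lax--Milgram and derive the estimate.} On $V$ the bilinear form $\mathcal E$ is bounded and coercive (coercivity is immediate because $\mathcal E(v,v)$ is the square of the $\dot H^s$ seminorm), and $\varphi\mapsto-\mathcal E(\ti F,\varphi)$ is a continuous linear functional by Cauchy--Schwarz, bounded by $c\|\ti F\|_{\dot H^s(\R)}$. Lax--Milgram produces a unique $v\in V$; equivalently $v$ is the minimizer of the strictly convex functional $w\mapsto\|w+\ti F\|_{\dot H^s(\R)}^2$ on $V$. Taking $w=0$ as a competitor gives
\[
\|u\|_{\dot H^s(\R)}=\|\ti F+v\|_{\dot H^s(\R)}\le\|\ti F\|_{\dot H^s(\R)}\le c\|F\|_{\dot H^s(\Om^c)},
\]
which is \eqref{homo-estimate}. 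Uniqueness in $\dot H^s(\R)$ follows by taking the difference of two solutions $w=u_1-u_2\in V$; then $\mathcal E(w,\varphi)=0$ for all $\varphi\in V$, and testing with $\varphi=w$ gives $\|w\|_{\dot H^s(\R)}=0$.

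\emph{Main obstacle.} The routine abstract Hilbert-space step in Step 3 is essentially free. The substantive work is Step 1: the existence of a bounded extension operator $E:\dot H^s(\Om^c)\to\dot H^s(\R)$ at the level of the homogeneous Gagliardo seminorm. Since $\Om$ is merely Lipschitz one cannot reflect directly; one has to invoke (or construct) a Stein/Jonsson--Wallin type extension adapted to the exterior domain, using a Lipschitz partition of unity near $\pa\Om$ together with the fact that $\pa\Om$ is compact so the extension has only bounded-support contributions to the seminorm. Getting the estimate purely in terms of the homogeneous seminorm (without picking up inhomogeneous lower-order terms) is the delicate point.
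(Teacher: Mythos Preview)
Your Lax--Milgram/variational argument is correct, but it is a genuinely different route from the paper's. The paper does not extend $F$ and then project; instead it writes the solution explicitly as a Riesz potential $u=I_{2s}\tilde\phi$ of a density $\phi\in\dot H^{-s}_0(\Om^c)$, and reduces existence to the bijectivity of the integral operator $S_s:\dot H^{-s}_0(\Om^c)\to\dot H^s(\Om^c)$, $\phi\mapsto(I_{2s}\tilde\phi)|_{\Om^c}$. Bijectivity is established from the coercivity identity $\langle\phi,S_s\phi\rangle\approx\|\tilde\phi\|_{\dot H^{-s}(\R)}^2$ (giving injectivity and closed range) together with a duality argument for dense range. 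The payoff of the paper's approach is the integral representation of $u$, which is an announced goal of the paper; your approach is shorter and more elementary but does not produce that representation.

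Two remarks on your write-up. First, what you flag as the ``main obstacle'' is in fact no obstacle at all here: by the paper's definition $\|F\|_{\dot H^s(\Om^c)}=\inf\{\|G\|_{\dot H^s(\R)}:G|_{\Om^c}=F\}$, so an extension $\tilde F$ with $\|\tilde F\|_{\dot H^s(\R)}\le(1+\epsilon)\|F\|_{\dot H^s(\Om^c)}$ exists \emph{by definition} for every $\epsilon>0$; no Stein or Jonsson--Wallin machinery is needed. Indeed your own minimization shows the infimum is attained by $u$, so one even gets $\|u\|_{\dot H^s(\R)}=\|F\|_{\dot H^s(\Om^c)}$. Second, in the uniqueness step, $\|w\|_{\dot H^s(\R)}=0$ only gives that $w$ is a polynomial; you should add (as the paper does) that $w=0$ on the open set $\Om^c$ forces the polynomial to vanish identically.
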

The function spaces $\dot{H}^s(\R)$ are homogeneous Sobolev spaces
(see \eqref{h-sobolev}) and $\dot{H}^s(\Om^c)$ are restrictions of
$\dot{H}^s(\R)$ over $\Om^c$ (see \eqref{rest-sobolev}).

To show the theorem  \ref{theo0229}, we  use the Riesz potential. In
 section \ref{integral operator}, we will define the integral
operator $S_s : \dot{H}^{-s}_{ 0} (\Om^c) \ri \dot{H}^s (\Om^c)$ by
\begin{align*}
S_s \phi = (I_{2s}\tilde f)|_{\Om^c}, \quad f \in  \dot{H}^{-s}_{ 0} (\Om^c),
\end{align*}
where $\dot{H}^{-s}_{ 0} (\Om^c)$ is a dual space of $\dot{H}^s
(\Om^c)$, $I_{2s}$ is the  Riesz transform and $\tilde \phi$ is a
zero extension of $\phi \in \dot{H}^{-s}_{ 0} (\Om^c)$ (see
\eqref{zero extension}). Note  that $u = I_{2s}\tilde \phi$ is a
weak solution of  the equation \eqref{main equation} (see section
3). So, the existence of a solution of the equation \eqref{main
equation} is related with the bijectivity of the operator $S_s :
\dot{H}^{-s}_{ 0} (\Om^c) \ri  \dot{H}^s (\Om^c)$. In section 4, we
will show that $S_s : \dot{H}^{-s}_{ 0} (\Om^c) \ri  \dot{H}^s
(\Om^c)$ is bijective.

Related with the regularity problem of the fractional Laplace
equation, T. Chang\cite{Ch} showed the  Dirichlet problem  of the
fractional Laplace equation whose domain is $\R \setminus \pa \Om$
and boundary is $\pa \Om$  for a bounded Lipschitz domain $\Om$. He
showed the existence of the solution of a fractional Laplace
equation whose boundary data is  in $H^{s -\frac12}(\pa \Om)$ and
then the solution is in $\dot{H}^s(\R)$ such that $|u(x)|   =
O(|x|^{-n +2s} )$ at infinite. To show this, he showed the
bijectivity of the boundary integral operator induced from the Riesz
potential.

We introduce another equivalent definition of the weak solution of a
fractional Laplace equation in $\Om$. We say that a function $u : \R
\ri {\bf R}$ is a $2s$-harmonic in $\Om$ if
\begin{align}\label{al-harmonic}
u(x) = E_x u(X_{\tau_V})
\end{align}
 for every bounded open set $V$ whose closure $\bar V$ is contained in $\Om$.
We say that $u$ is  a regular
 $2s$-harmonic in $\Om$ if
 \begin{align}\label{regular al-harmonic}
 u(x) = E_x u(X_{\tau_\Om}).
 \end{align}

By the strong Markov property of $X_t$,  if $u$ is a $2s$-harmonic
function  in $\Om$ and $V $ is a
 open subset of $\Om$ such that $\bar V \subset \Om$, then $u$ is a $2s$-regular harmonic in $V$. Moreover, \eqref{regular al-harmonic}
 implies \eqref{al-harmonic}, so that
regular $2s$-harmonic functions are $2s$-harmonic functions. The converse is not true (see section 3 in \cite{Bo2}).
It is well-known that $u$ is $2s$-harmonic in $\Om$ if and only if $u$ is continuous and
$\De^s u(x) =0$ for $x \in \Om$ (see Theorem 3.9  in \cite{Bo1}).

The second main result of this paper is the following theorem.
\begin{theo}\label{theo0229-2}
If  $\Om$ is a bounded $C^{1,1}$ domain and  $0 < s <
\frac{n}{2(n-1)}$, then the  weak solution $u$ of the theorem
\ref{theo0229} is regular $2s$-harmonic.
\end{theo}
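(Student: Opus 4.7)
The plan is to identify the weak solution $u$ of Theorem \ref{theo0229} with the probabilistic (Poisson-kernel) solution
\begin{align*}
v(x) := \int_{\Om^c} K(x,y) F(y)\,dy, \qquad x \in \Om,
\end{align*}
which is regular $2s$-harmonic by definition \eqref{regular al-harmonic}. I would first carry out the identification for smooth compactly supported data via the uniqueness part of Theorem \ref{theo0229}, and then extend to all $F \in \dot{H}^s(\Om^c)$ by approximation. The hypothesis $s < \frac{n}{2(n-1)}$ enters precisely in the limiting step, through $L^{p'}$-integrability of the Poisson kernel near $\pa \Om$.

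For $F \in C_c^\infty(\Om^c)$, standard results on $C^{1,1}$ domains (Bogdan, Chen--Song) give that $v$ is continuous on $\R$, coincides with $F$ on $\Om^c$, and satisfies $\De^s v = 0$ pointwise in $\Om$. Splitting the Gagliardo seminorm $[v]_{\dot H^s(\R)}^2$ into the three pieces $\Om \times \Om$, $\Om \times \Om^c$, $\Om^c \times \Om^c$, and using the boundary H\"older regularity of $v$ together with the smoothness of $F$, one checks $v \in \dot H^s(\R)$. Thus $v$ is itself a weak solution of \eqref{main equation}, and the uniqueness part of Theorem \ref{theo0229} forces $u = v$ everywhere, so $u$ is regular $2s$-harmonic in this case.

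For general $F \in \dot H^s(\Om^c)$, pick $F_k \in C_c^\infty(\Om^c)$ with $F_k \ri F$ in $\dot H^s(\Om^c)$ and let $u_k, v_k$ be the associated weak and probabilistic solutions. The first step gives $u_k = v_k$, while Theorem \ref{theo0229} gives $u_k \ri u$ in $\dot H^s(\R)$, hence $u_k \ri u$ a.e.\ on $\Om$ along a subsequence by Sobolev embedding. For each fixed $x \in \Om$, H\"older's inequality combined with the Sobolev embedding $\dot H^s(\Om^c) \hookrightarrow L^p(\Om^c)$, $p = \frac{2n}{n-2s}$, yields
\begin{align*}
|v_k(x) - v_m(x)| \leq \|K(x,\cdot)\|_{L^{p'}(\Om^c)} \|F_k - F_m\|_{L^p(\Om^c)}, \qquad p' = \tfrac{2n}{n+2s}.
\end{align*}

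The main obstacle is therefore the finiteness of $\|K(x,\cdot)\|_{L^{p'}(\Om^c)}$. The decay $K(x,y) \lesssim_x |y|^{-n-2s}$ at infinity puts $K(x,\cdot)$ in $L^{p'}$ at infinity without any further constraint, whereas near $\pa \Om$ the sharp Poisson-kernel estimate gives $K(x,y) \lesssim_x \mathrm{dist}(y, \pa\Om)^{-s}$, which is in $L^{p'}$ locally iff $sp' < 1$; a direct calculation shows $sp' < 1 \iff s < \frac{n}{2(n-1)}$, exactly the hypothesis. Once this is established, $\{v_k(x)\}$ is Cauchy for every $x \in \Om$, so $v_k(x) \ri v(x)$ pointwise. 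Combining with the a.e.\ convergence $u_k \ri u$ yields $u = v$ a.e.\ on $\Om$; identifying $u$ with its pointwise representative $v$ gives $u(x) = E_x F(X_{\tau_\Om})$ for every $x \in \Om$, i.e., $u$ is regular $2s$-harmonic.
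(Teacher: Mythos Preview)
Your argument reaches the same destination but by a genuinely different route. The paper never introduces the Poisson extension $v$ nor approximates the boundary datum; it works directly with the weak solution $u$. Remark \ref{rem-0221}(2) already gives that $u$ is $2s$-harmonic in $\Om$, so the whole content of Theorem \ref{theo0229-2} is the upgrade from ``$2s$-harmonic'' to ``regular $2s$-harmonic''. The paper does this via an \emph{interior exhaustion} (Lemma \ref{lemma0313}): choose $C^{1,1}$ domains $\Om_k\uparrow\Om$, split
\[
u(x)=E_x\big[u(X_{\tau_{\Om_k}});\,X_{\tau_{\Om_k}}\in\Om\setminus\Om_k\big]
     +E_x\big[u(X_{\tau_{\Om_k}});\,X_{\tau_{\Om_k}}\in\Om^c\big],
\]
and use the Chen--Song Poisson-kernel bound for $\Om_k$ together with $u\in L^{2n/(n-2s)}(\Om)$ (Sobolev embedding of $\dot H^s$) to kill the first term as $k\to\infty$. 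The same H\"older step and the same threshold $sp'<1$, $p'=\tfrac{2n}{n+2s}$, appear in both proofs; the only structural difference is that the paper integrates the kernel $K_k(x,\cdot)$ against $u$ over the shrinking annuli $\Om\setminus\Om_k$, whereas you integrate $K(x,\cdot)$ against $F_k-F_m$ over $\Om^c$. The paper's route is more self-contained: it needs neither the density of smooth functions in $\dot H^s(\Om^c)$ nor your assertion that the probabilistic extension of a smooth $F$ lies in $\dot H^s(\R)$. That last step, which you attribute to ``boundary H\"older regularity'', is in fact borderline --- $C^s$ by itself does not embed into $H^s$ (the Gagliardo integrand is only $O(|x-y|^{-n})$), so one really needs the sharper $v/\de^s\in C^{\al}(\bar\Om)$ information or a direct energy argument to close it. On the other hand, your approach yields as a byproduct the explicit representation $u(x)=\int_{\Om^c}K(x,y)F(y)\,dy$ for all $F\in\dot H^s(\Om^c)$, which the paper does not record.
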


This paper is organized as follows. In section 2, we introduce
several function spaces and study their properties. In section 3, we
introduce integral operators in function spaces defined in section
2. In section 4 and section 5, we prove the main results.

\section{Function spaces}\label{function space}
\setcounter{equation}{0}

In this paper, we denote $\Om$ is a bounded Lipschitz domain. we
also denote the letters $x,\, y, \, \xi$ as  points in $\R$. The
letter $c$ denotes a positive constant depending only on $n, \,\, s
$ and $\Om$.

To statement the main results, we introduce  several results
of harmonic analysis (see ,eg., chapter 9
in  \cite{Fo}).
Let ${\mathcal S}(\R)$ be Schwartz space and ${\mathcal S}'(\R)$ be its dual space
(space of distribution).
\begin{itemize}
\item[{\bf 1})](Fourier transform of distribution)
We define Fourier transform $\hat f \in {\mathcal S}'(\R)$ of $f$ by
\begin{align*}
< \hat f, \phi>: = <f, \hat \phi> \qquad \phi \in {\mathcal S}(\R),
\end{align*}
where $< \cdot, \cdot>$ is duality pairing between  ${\mathcal S}(\R)$ and ${\mathcal S}'(\R)$.
\item[{\bf 2})] (Convolution of distribution)
We define the convolution of $f \in {\mathcal S}^{'}(\R)$ and
$\phi \in {\mathcal S}(\R)$ by
\begin{align*}
f * \phi(x): = <f, \phi(x- \cdot)> \qquad x \in \R.
\end{align*}
Note that $f * \phi $ is slowly decay $C^\infty(\R)$ function and so
$f *\phi \in {\mathcal S}' (\R) $ is well-defined and
\begin{align*}
<f * \phi, \psi>  = \int_{\R} f * \phi(x) \psi(x) dx = <f, \phi * \psi>\quad
\psi \in {\mathcal S}(\R).
\end{align*}
\item[{\bf 3})]
For $f \in{\mathcal S}^{'}(\R)$ and $\psi, \, \phi \in {\mathcal S}(\R)$,
we have
\begin{align*}
f *(\psi * \phi)  =(f *\psi) *\phi =  (f * \phi) *\psi.
\end{align*}

\end{itemize}

Now, we are ready to define the function spaces.
Let $\eta$ be in Schwartz space ${\mathcal
S} ({\bf R}^{n})$ such that
\begin{eqnarray*}
 \left\{\begin{array}{rl}
&\hat \eta(\xi) > 0  \quad \mbox{ on }  2^{-1} < |\xi|   < 2 ,\\
& \hat \eta(\xi)=0 \quad \mbox{ elsewhere}, \\
&\sum_{ -\infty < i < \infty } \hat \eta(2^{-i}\xi ) =1 \qquad \xi  \neq 0.
\end{array}\right.
\end{eqnarray*}
We define functions  $\eta_i \in {\mathcal S}({\bf R}^{n}) $
whose Fourier transforms are
 written by
\begin{eqnarray}\label{psi1}
\begin{array}{ll}
\widehat{\eta_i}(\xi ) := \hat \eta(2^{-i} \xi ) \quad (i = 0, \pm 1, \pm 2 , \cdots).
\end{array}
\end{eqnarray}
For $\al \in {\bf R}$, we define the homogeneous Sobolev space
${\dot H}^{\al } ({\bf R}^{n})$ by
\begin{eqnarray}\label{h-sobolev}
{\dot H}^{\al }  ({\bf R}^{n}): = \{ f \in {\mathcal
S}^{'}({\bf R}^{n}) \, | \, \|f\|_{{\dot H}^{\al }(\R)} <
\infty \, \}.
\end{eqnarray}
Here,
\begin{align} \label{norm}
 \|f\|^2_{{\dot H}^{\al } (\R)} :&  =
\sum_{ -\infty < i  < \infty}  \int_{\R}    |\xi|^{2\al} |\widehat{\eta_i *
f}(\xi)|^2 d\xi,
\end{align} where $*$ is a convolution in ${\bf
R}^{n}$.
Note that $\|f\|_{{\dot H}^{\al } (\R)} =0$ if and only if $supp \, \hat f = \{0\}$, i.e. if and only if
$f$ is a polynomial (see chapter 6.3 in \cite{BL}).
By {\bf 2}), $\eta_i * f $ is slow decay $C^\infty$ function and so \eqref{norm}
is well-defined for $\al < 0$ also.

For $\al \geq 0$,
\begin{align*}
\sum_{ -\infty < i  < \infty} \int_{\R}|\xi|^{ 2\al } |\widehat{\eta_i * f}(\xi)|^2 d\xi
&= \sum_{ -\infty < i  < \infty} \int_{\R}
           |\xi|^{ 2\al } |\hat \eta_i(\xi)|^2 |\hat f(\xi)|^2 d\xi\\
&\approx  \int_{\R}
           |\xi|^{ 2\al }   |\hat f(\xi)|^2 d\xi,
\end{align*}
where $"A \approx B"$ means that there are positive constants $c_1$
and $c_2$ such that $c_1 A \leq B \leq c_2 A$. Hence, for $\al \geq
0$, we get
\begin{equation}\label{homogeneous norm}
\dot{H}^\al(\R) = \{ f \in{\mathcal S}'(\R) \,\, | \,\, \int_{\R} |\xi|^{2\al} |\hat{ f}(\xi)|^2 d\xi < \infty \}.
\end{equation}
and
\begin{align*}
\| f\|_{\dot{H}^\al(\R)} \approx   \Big(\int_{{\bf R}^n}|\xi|^{2\al}|\hat{ f}(\xi)|^2 d\xi \Big)^\frac12.
\end{align*}

By simple calculation,   for $f \in \dot{H}^\al(\R), \,\, 0 < \al <2$,  we obtain
\begin{align} \label{homogeneous norm2}
\int_{{\bf R}^n} \int_{{\bf R}^n}
\frac{|f(x+y )- 2f(x) +  f( x-y)|^2}{|y|^{n + 2\al}} dydx =C\int_{{\bf R}^n}|\xi|^{2\al}|\hat{f}(\xi)|^2 d\xi.
\end{align}

Note that for $\al > 0$,  $\dot{H}^{-\al}(\R) $
 is  dual space of $\dot{H}^{\al}(\R)$.
That is, $\dot{H}^{-\al}(\R) = (  \dot{H}^{\al}(\R))^*$.
Note that for  $\phi \in \dot{H}^{-\al}(\R)$ and $ f \in  \dot{H}^{\al}(\R)$,
\begin{align}\label{dual}
< \phi, f> : = \sum_{-\infty < i < \infty} \int_{\R} \widehat{\eta_i *\phi}(\xi) \hat{f}(\xi) d\xi.
\end{align}

Suppose that $\psi \in{\mathcal S} (\R)$ such that
$supp \,\, \hat \psi \cap supp \,\, \hat\eta_i =\emptyset$.
Then, by {\bf 1}) and {\bf 2}), we have
\begin{align*}
< \widehat{\eta_i * f}, \psi> = < \eta_i * f, \hat \psi> = < f, \eta_i * \hat \psi>
\end{align*}
But,  by Plancherel's Theorem, we get
\begin{align*}
\eta_i * \hat \psi(x) & = \int_{\R} \eta_i(y)\hat \psi(x-y) dx\\
 & = \int_{\R} e^{-2\pi x \cdot \xi} \hat \eta_i(\xi) \psi(\xi) d\xi\\
 & =0.
\end{align*}
This implies that $supp \,\, \widehat{f * \eta_i} \subset supp \,\, \hat\eta_i$.
For $f \in \dot H^\al(\R), \,\, \al \in \R$, we define the fractional Laplacian of $f$ by
\begin{align*}
\widehat{\De^s f} = (2\pi |\xi|)^{2s} \hat f.
\end{align*}
If $\al > 2s$, then above definition is same with \eqref{definition3}.
Since for
$f \in \dot H^\al(\R), \,\, \al \in {\bf R}$,
$f = \sum_{-\infty< i <\infty} f * \eta_i$,
by {\bf 3}), we have
\begin{align*}
\eta_i * (\De^s f) & = {\mathcal F}^{-1}(\widehat{\eta_i * (\De^s f)})\\
 &=  {\mathcal F}^{-1} \Big(\sum_{-\infty< k < \infty}  (2\pi |\xi|)^{2s} \hat \eta_i
             \widehat{  \eta_k*f} \Big),
\end{align*}
where  ${\mathcal F}^{-1}$ be a inverse Fourier transform.
Since $supp\,\, \hat \eta_i \cap supp \,\, \hat \eta_k =\emptyset$ for $|i-k| >2$ and
$\eta_{i-1}(\xi) + \eta_i(\xi) + \eta_{i+1}(\xi) =1$ for
$\xi \in supp \,\, \eta_i$, we have
\begin{align*}
\eta_i * (\De^s f)  & =  {\mathcal F}^{-1} \Big((2\pi|\xi|)^{2s}  \hat \eta_i
  \Big(\widehat{  \eta_{i-1}*f}   +\widehat{  \eta_i*f}  +\widehat{  \eta_{i+1}*f}\Big) \Big)\\
 & = \De^s\Big( \Big(\eta_{i-1} + \eta_i + \eta_{i+1} \Big)  *  ( \eta_i*f )  \Big) \\
 & = \De^s  ( \eta_i*f ).
\end{align*}
Hence, we get
\begin{align*}
\|\De^s f\|^2_{\dot H^{\al -2s}(\R)} & =\sum_{-\infty<i<\infty}
\int_{\R}   |\xi|^{2(\al -2s)}|\widehat{\eta_i * \De^s f}(\xi)|^2 d\xi\\
 & =\sum_{-\infty<i<\infty}
\int_{\R}   |\xi|^{2(\al -2s)}|\widehat{ \De^s(\eta_i *  f)}(\xi)|^2 d\xi\\
 &=\sum_{-\infty<i<\infty}
\int_{\R}    |\xi|^{2\al }  |\widehat{ (\eta_i* f)}(\xi)|^2 d\xi\\
& = \| f\|^2_{\dot H^\al(\R)}
\end{align*}
Hence, $\De^s: \dot H^{\al}(\R) \ri \dot H^{\al -2s}(\R)$ is isomorphism.

Now, we consider   a   bounded Lipschitz domain $\Om$ in $\R, \,\, n \geq 2$.

\begin{defin}\label{1013-2}
Let  $ 0 <s <  1$. We say that $v \in \dot{H}^s  (\R)$ is a {\it weak solution} of
a fractional Laplace equation $\De^s$ in $\Om $
if  $ \De^sv =0$   in $\Om$. That is, $v$ satisfies
\begin{align}\label{1013}
< \De^s v,    \psi> = 0
\end{align}
for all $\psi \in \dot{H}^s(\R)$ whose support  is contained in  $\Om $.
\end{defin}

\begin{rem}\label{rem-0221}
\begin{itemize}
\item[(1)]
By the definition of the fractional Laplacian \eqref{definition3} and \eqref{dual},
for $f, \psi \in \dot{H}^s(\R)$
we have
\begin{align}\label{rem-0402}
\notag< \De^s f,    \psi> & = \sum_{-\infty< i < \infty}
             \int_{\R} \widehat{ \eta_i*\De^s f}(\xi) \hat \psi(\xi) d\xi  \\
  & =    \sum_{-\infty< i < \infty}        \int_{\R} \widehat{\De^s ( \eta_i *f)}(\xi) \hat \psi(\xi) d\xi \\
\notag& = \int_{\R} (2\pi  |\xi|)^{2s} \hat{f}(\xi) \bar {\hat{\psi}} (\xi) d \xi.
\end{align}

\item[(2)]
In fact, $v$ is  continuous function in $\Om$ and satisfies
\begin{align*}
\De^s v(x) =0 \qquad \mbox{for } \qquad x \in \Om.
\end{align*}
(see Theorem 3.9  in \cite{Bo1}).
\end{itemize}
\end{rem}

 For $\al > 0$, we define the function spaces
\begin{align}\label{rest-sobolev}
\begin{array}{ll} \vspace{2mm}
\dot{H}^\al(\Om^c): &= \{ f|_{\Om^c}  \, | \,  f \in \dot{H}^\al(\R) \},\\
\dot{H}^\al_{ 0}(\Om^c): &= \{ f \in L^2_{loc} (\Om^c) \, | \, \tilde f \in \dot{H}^\al(\R) \},
\end{array}
\end{align}
where $\tilde f $ is zero extension of $f$ to $\R$. That is,
\begin{align*}
\tilde f(x) = \left\{\begin{array}{ll} \vspace{2mm}
f(x)& x \in \Om,\\
0 & \mbox{otherwise}.
\end{array}
\right.
\end{align*}
 The norms are
\begin{align*}
\| f\|_{\dot{H}^\al(\Om^c)  } : & = \inf_{F \in \dot{H}^\al(\R), \, F|_{\Om^c} =f } \| F\|_{ \dot{H}^\al(\R)  },\\
\| f \|_{ \dot{H}^\al_{ 0}(\Om^c)} : & = \| \tilde f\|_{\dot{H}^\al(\R) }.
\end{align*}
We also define $\dot{H}^{-\al}(\Om^c), \,\, \dot{H}^{-\al}_{0}(\Om^c)$ are dual spaces of
$ \dot{H}^{\al}_{ 0}(\Om^c)$ and
$\dot{H}^{\al}(\Om^c)$, respectively.
For $f \in \dot{H}^{-\al}_{ 0}(\Om^c)$, we define
$\tilde f \in  \dot{H}^{-\al}(\R)$ by
\begin{align}\label{zero extension}
<\tilde f, \phi> : = <f, \phi|_{\Om^c}> \quad \phi \in \dot{H}^{\al}(\R).
\end{align}
Note that
\begin{align}\label{equiv}
\| \tilde \phi\|_{\dot{H}^{-\al}(\R)} \approx \| \phi\|_{\dot{H}^{-\al}_{0} (\Om^c)}.
\end{align}

\section{Integral operators}\label{integral operator}
\setcounter{equation}{0}
For $ 0 < s < n $, we define Riesz transform in $\dot{H}^\al(\R), \,\, \al \in {\bf R}$ by
\begin{align*}
I_s: \dot{H}^{\al} (\R) \ri  \dot{H}^{\al +s} (\R), \qquad \widehat{I_s f} = (2\pi |\xi|)^{-s}\hat{f}, \quad
f \in \dot{H}^\al(\R).
\end{align*}
As the same method of the case of $\De^s$, we can induce the  result
$\eta_i * (I_s f) = I_s( \eta_i * f)$.
Hence, for $ f\in  \dot{H}^{\al} (\R), \,\, \al \in {\bf R}$,
\begin{align*}
\| I_s f\|^2_{ \dot{H}^{\al+s} (\R)} &= \sum_{-\infty < i <\infty}
       \int_{\R} |\xi|^{2(\al +s)}|\widehat{\eta_i *I_sf}(\xi)|^2 d\xi\\
     &= \sum_{-\infty < i <\infty}
       \int_{\R} |\xi|^{2(\al +s)}|\widehat{I_s( \eta_i *f)}(\xi)|^2 d\xi\\
     &= \sum_{-\infty < i <\infty}
       \int_{\R} |\xi|^{2\al}|\widehat{ \eta_i *f)}(\xi)|^2 d\xi \\
     & = \| f\|^2_{\dot H^\al (\R)}.
\end{align*}
Hence, $I_s: \dot{H}^{\al} (\R) \ri  \dot{H}^{\al +s} (\R)$ is isomorphism.
By the definition of fractional Laplacian of $I_{2s}f$, we have $\De^s I_{2s} f =f$
with distribution sense.

If $0 < s < n$ and $f \in \dot{H}^\al(\R), \,\, \al \geq 0$, then $I_sf$ is represented by
\begin{align}\label{boundary integral operator}
I_s f (x)  = \int_{ \R}   \Ga_s (x -y) f (y) dy \qquad x \in \R,
\end{align}
where
\begin{align}\label{riesz kernel}
\Ga_s (x) = c(n,s) \frac1{|x|^{n-s}}
\end{align}
is the Riesz potential of order $s$  in $\R$ (see section 4 in \cite{St}).
Since $I_{2s} f \in \dot{H}^s(\R)$ for $f \in \dot{H}^{-s}(\R)$,  by \eqref{dual}, we have
\begin{align}\label{0516}
\notag < f, I_{2s} f>  & = \sum_{-\infty< i<\infty}
  \int_{\R} \widehat{\eta_i * f}(\xi) \widehat{I_{2s} f}(\xi)  d\xi\\
\notag  & = \sum_{-\infty< i<\infty}  \sum_{-\infty< k < \infty}
  \int_{\R} \widehat{\eta_i * f}(\xi)  \widehat{   \eta_k * I_{2s} f}(\xi)  d\xi\\
  & = \sum_{-\infty< i<\infty}  \sum_{-\infty< k < \infty}
  \int_{\R} \widehat{\eta_i * f}(\xi) |2\pi \xi|^{-2s} \widehat{   \eta_k* f}(\xi)  d\xi\\
\notag &  = \sum_{-\infty< i<\infty}
  \int_{\R} |\xi|^{-2s}  |\widehat{\eta_i * f}(\xi)|^2  d\xi\\
\notag & \approx \| f\|^2_{\dot{H}^{-s} (\R)}.
  \end{align}

For $\phi \in    \dot{H}^{-s}_{ 0} (\Om^c)$, let us $u(x) :=  I_{2s} \tilde \phi(x)$.
Note that $\De^s u =\tilde \phi$. Hence, $u$ is weak solution of \eqref{main equation} and by (2) of remark \ref{rem-0221}, we have
$$\De^s u(x) =0, \qquad x \in \Om.
$$

We define bounded operator $  S_s : \dot{H}^{-s}_{ 0} (\Om^c) \ri  \dot{H}^s (\Om^c)$ by
\begin{align*}
S_s \phi :=  (I_{2s} \tilde \phi)|_{\Om^c}, \qquad \phi \in \dot{H}^{-s}_{ 0} (\Om^c).
\end{align*}

To prove theorem \ref{theo0229}, we prove the following theorem.
\begin{theo}\label{0112}
Let $\Om$ be a bounded Lipschitz domain in $\R$ and $0 <s < 1$. Then
\begin{align*}
 S_{s}: \dot{H}^{-s}_0 (\Om^c) \ri \dot{H}^{s} (\Om^c)
\end{align*}
is bijective.

\end{theo}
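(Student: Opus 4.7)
The plan is to set up $S_s$ variationally via a Lax--Milgram argument. Define the bilinear form
\begin{align*}
B(\phi,\psi):=\langle \tilde\phi,\, I_{2s}\tilde\psi\rangle,\qquad \phi,\psi\in \dot H^{-s}_0(\Omega^c),
\end{align*}
where $\tilde\phi,\tilde\psi\in\dot H^{-s}(\mathbb{R}^n)$ are the zero extensions. Boundedness of $B$ follows immediately from $I_{2s}:\dot H^{-s}(\mathbb{R}^n)\to\dot H^{s}(\mathbb{R}^n)$ being an isomorphism together with \eqref{equiv}. Coercivity is the key input, and it is already essentially recorded in \eqref{0516}: by that computation applied to $\tilde\phi$,
\begin{align*}
B(\phi,\phi)=\langle\tilde\phi,I_{2s}\tilde\phi\rangle\approx \|\tilde\phi\|^2_{\dot H^{-s}(\mathbb{R}^n)}\approx \|\phi\|^2_{\dot H^{-s}_0(\Omega^c)},
\end{align*}
where the second equivalence is \eqref{equiv}. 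So $B$ is continuous and coercive on $\dot H^{-s}_0(\Omega^c)$.

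Next I rewrite $B$ in terms of $S_s$. By the definition of the zero extension \eqref{zero extension} and the fact that $I_{2s}\tilde\phi\in\dot H^s(\mathbb{R}^n)$ restricts to $S_s\phi$ on $\Omega^c$,
\begin{align*}
B(\phi,\psi)=\langle I_{2s}\tilde\phi,\tilde\psi\rangle = \langle\tilde\psi,\, I_{2s}\tilde\phi\rangle = \langle \psi,\, (I_{2s}\tilde\phi)|_{\Omega^c}\rangle = \langle \psi,\, S_s\phi\rangle_{\Omega^c},
\end{align*}
the last pairing being the duality between $\dot H^{-s}_0(\Omega^c)$ and $\dot H^s(\Omega^c)$. (Symmetry of $I_{2s}$ here is transparent from the Fourier representation $\widehat{I_{2s}f}=(2\pi|\xi|)^{-2s}\hat f$.) With this identity in hand, injectivity and surjectivity are both immediate:

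\emph{Injectivity.} If $S_s\phi=0$ in $\dot H^s(\Omega^c)$, then $B(\phi,\phi)=\langle\phi,S_s\phi\rangle_{\Omega^c}=0$, so coercivity gives $\|\phi\|_{\dot H^{-s}_0(\Omega^c)}=0$.

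\emph{Surjectivity.} Given $f\in\dot H^s(\Omega^c)$, the map $\psi\mapsto\langle \psi,f\rangle_{\Omega^c}$ is a bounded linear functional on $\dot H^{-s}_0(\Omega^c)$. By Lax--Milgram applied to $B$, there exists a unique $\phi\in\dot H^{-s}_0(\Omega^c)$ with
\begin{align*}
B(\phi,\psi)=\langle \psi,f\rangle_{\Omega^c}\qquad\forall\,\psi\in\dot H^{-s}_0(\Omega^c).
\end{align*}
Comparing this with $B(\phi,\psi)=\langle\psi,S_s\phi\rangle_{\Omega^c}$ and using that $\dot H^s(\Omega^c)=(\dot H^{-s}_0(\Omega^c))^*$ (in the sense defined just before \eqref{zero extension}) forces $S_s\phi=f$.

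The only delicate step I expect is verifying that the duality pairing $\langle\psi, g\rangle_{\Omega^c}$ between $\dot H^{-s}_0(\Omega^c)$ and $\dot H^s(\Omega^c)$ coincides with the $\mathbb{R}^n$-pairing $\langle\tilde\psi,G\rangle$ for any representative $G\in\dot H^s(\mathbb{R}^n)$ of $g$, and in particular that this is independent of the chosen extension. This is where the definition \eqref{zero extension} and the quotient structure of $\dot H^s(\Omega^c)$ have to be used carefully; once that is in place, the rest is a direct application of Lax--Milgram together with the coercivity estimate \eqref{0516}.
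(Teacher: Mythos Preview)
Your proposal is correct and rests on exactly the same ingredient as the paper's proof: the coercivity estimate \eqref{0516}, rewritten as $\langle \phi, S_s\phi\rangle_{\Omega^c}\approx\|\tilde\phi\|^2_{\dot H^{-s}(\mathbb{R}^n)}\approx\|\phi\|^2_{\dot H^{-s}_0(\Omega^c)}$. The paper packages this slightly differently: instead of invoking Lax--Milgram, it proves by hand that $S_s$ is injective (from coercivity), that $S_s$ has closed range (from the lower bound $\|\phi\|_{\dot H^{-s}_0(\Omega^c)}\le c\,\|S_s\phi\|_{\dot H^s(\Omega^c)}$ that coercivity yields), and then that the adjoint $S_s^*$ is injective by the \emph{same} coercivity argument, giving dense range. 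Your Lax--Milgram formulation is the abstract version of precisely this three-step argument and is a bit more economical; the only point needing care, which you correctly flag, is the identification $(\dot H^{-s}_0(\Omega^c))^*=\dot H^s(\Omega^c)$, and this follows from the paper's definition $\dot H^{-s}_0(\Omega^c):=(\dot H^s(\Omega^c))^*$ together with reflexivity of the Hilbert quotient $\dot H^s(\Omega^c)$.
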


\section{Proof of Theorem \ref{0112} }
\setcounter{equation}{0}

To prove Theorem \ref{0112}, we need several following lemmas.

\begin{lemm}
\begin{align*}
S_{s} : \dot{H}^{-s}_{  0 }(\Om^c) \ri  \dot{H}^{s}(\Om^c)
\end{align*}
is one-to-one.
\end{lemm}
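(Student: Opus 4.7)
The plan is an energy/duality argument using identity \eqref{0516}. Suppose $\phi \in \dot{H}^{-s}_{0}(\Om^c)$ satisfies $S_s\phi = 0$, i.e.\ $u := I_{2s}\tilde\phi \in \dot{H}^s(\R)$ vanishes on $\Om^c$. I will show $\phi \equiv 0$ by evaluating the pairing $\langle \tilde\phi, u\rangle$ in two different ways and comparing.

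First, since $\tilde\phi \in \dot{H}^{-s}(\R)$ is defined as the zero extension via \eqref{zero extension}, pairing it against $u \in \dot{H}^s(\R)$ gives
\begin{align*}
\langle \tilde\phi, u\rangle = \langle \phi, u|_{\Om^c}\rangle = 0,
\end{align*}
because $u|_{\Om^c} = 0$ by assumption. Second, identity \eqref{0516} applied with $f = \tilde\phi$ yields
\begin{align*}
\langle \tilde\phi, u\rangle = \langle \tilde\phi, I_{2s}\tilde\phi\rangle \approx \|\tilde\phi\|^2_{\dot{H}^{-s}(\R)}.
\end{align*}
Combining the two gives $\|\tilde\phi\|_{\dot{H}^{-s}(\R)} = 0$, and by the equivalence \eqref{equiv} also $\|\phi\|_{\dot{H}^{-s}_{0}(\Om^c)} = 0$.

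As recalled in Section \ref{function space}, the vanishing of this homogeneous Sobolev norm forces $\tilde\phi$ to be a polynomial (its Fourier transform is supported at the origin). But $\tilde\phi$ is the zero extension of $\phi$, so as a distribution it is zero against every test function supported in the nonempty open set $\Om$; a polynomial with this property must be identically zero. Hence $\phi = \tilde\phi|_{\Om^c} = 0$, which proves injectivity.

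The main obstacle here is purely bookkeeping: one must check that the two evaluations of $\langle \tilde\phi, u\rangle$ are legitimate, i.e.\ that $\tilde\phi$ and $u$ really lie in the dual pair $\dot{H}^{-s}(\R)$ and $\dot{H}^s(\R)$. This is ensured by the isomorphism property of $I_{2s}$ established in Section \ref{integral operator} together with the definition \eqref{zero extension} of the zero extension. Once these pairings are justified, the rest of the argument is essentially a one-line consequence of \eqref{0516}.
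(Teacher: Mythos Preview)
Your argument is correct and is essentially identical to the paper's proof: both compute the pairing $\langle \tilde\phi, I_{2s}\tilde\phi\rangle$, observe that it vanishes because $S_s\phi=0$ while \eqref{0516} forces it to be comparable to $\|\tilde\phi\|_{\dot H^{-s}(\R)}^2$, and then use the polynomial characterization together with $\tilde\phi=0$ on $\Om$ to conclude. The only difference is cosmetic---you spell out the step $\langle\tilde\phi,u\rangle=\langle\phi,u|_{\Om^c}\rangle$ via \eqref{zero extension}, whereas the paper compresses this into the single line $\langle f,S_sf\rangle_{\Om^c}=\langle\tilde f,I_{2s}\tilde f\rangle$.
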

\begin{proof}
Suppose that $S_{s} f =0$ for $f \in  H^{-s}_{ 0}(\Om^c)$.
By definition $\tilde f$, we have $\tilde f \in H^{-s}(\R)$.
Then, by \eqref{0516}, we have
\begin{align}\label{1203}
0=< f,  S_{s} f>_{\Om^c} = < \tilde f, I_{2s} \tilde f>
\approx \| \tilde f\|^2_{\dot{H}^{-s} (\R)}.
\end{align}
Hence, we have $\tilde f = 0$ in $\dot{H}^{-s}(\R)$ and so  $\tilde
f$ is polynomial. Since $\tilde f =0$ in $\Om$, we get $\tilde f =0$
in $\R$. This implies that $f =0$. Hence, $S_{s} : \dot{H}^{-s}_{
0}(\Om^c) \ri  \dot{H}^{s}(\Om^c)$ is one-to-one.
\end{proof}

\begin{lemm}\label{closedrange}
\begin{align*}
S_{s} : \dot{H}^{-s}_{ 0 }(\Om^c) \ri  \dot{H}^{s}(\Om^c)
\end{align*}
has closed range.
\end{lemm}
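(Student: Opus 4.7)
The plan is to extract a coercivity (Gårding-type) estimate for $S_s$ directly from identity \eqref{0516}, and then conclude closed range via the standard fact that a bounded linear operator $T : X \to Y$ satisfying an a priori bound $\|x\|_X \leq C \|Tx\|_Y$ automatically has closed range.

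First I would unwind the duality pairing. For $\phi \in \dot H^{-s}_0(\Om^c)$, the element $S_s\phi = (I_{2s}\ti\phi)|_{\Om^c}$ lies in $\dot H^s(\Om^c)$, and by the very definition \eqref{zero extension} of the zero-extension $\ti\phi \in \dot H^{-s}(\R)$,
\begin{align*}
\langle \phi, S_s\phi\rangle_{\Om^c} \;=\; \langle \ti\phi,\, I_{2s}\ti\phi\rangle_{\R},
\end{align*}
where the right-hand pairing is the one between $\dot H^{-s}(\R)$ and $\dot H^s(\R)$. By \eqref{0516} this is comparable to $\|\ti\phi\|_{\dot H^{-s}(\R)}^2$, and by the norm equivalence \eqref{equiv} this is in turn comparable to $\|\phi\|_{\dot H^{-s}_0(\Om^c)}^2$.

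Next, by the duality Cauchy-Schwarz inequality between $\dot H^{-s}_0(\Om^c)$ and $\dot H^s(\Om^c)$,
\begin{align*}
|\langle \phi, S_s\phi\rangle_{\Om^c}| \;\leq\; \|\phi\|_{\dot H^{-s}_0(\Om^c)}\, \|S_s\phi\|_{\dot H^s(\Om^c)}.
\end{align*}
Combining the two displays yields the coercivity estimate
\begin{align*}
c\, \|\phi\|_{\dot H^{-s}_0(\Om^c)} \;\leq\; \|S_s\phi\|_{\dot H^s(\Om^c)}
\end{align*}
for some $c>0$ independent of $\phi$.

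From here the closed range conclusion is routine: if $S_s\phi_n \to g$ in $\dot H^s(\Om^c)$, then $\{S_s\phi_n\}$ is Cauchy in $\dot H^s(\Om^c)$, so the a priori estimate forces $\{\phi_n\}$ to be Cauchy in $\dot H^{-s}_0(\Om^c)$, hence convergent to some $\phi$. Continuity of $S_s$ then gives $g = S_s\phi \in \mathrm{Range}(S_s)$. I do not expect any real obstacle here; all the substantive work was already done in establishing \eqref{0516}, and the present lemma is essentially a bookkeeping exercise combining that identity with duality and the equivalence \eqref{equiv}.
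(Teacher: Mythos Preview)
Your argument is correct and is essentially identical to the paper's own proof: both combine the identity \eqref{0516} (which the paper cites via \eqref{1203}) with the equivalence \eqref{equiv} and the duality bound to obtain the coercivity estimate $\|\phi\|_{\dot H^{-s}_0(\Om^c)} \leq c\,\|S_s\phi\|_{\dot H^s(\Om^c)}$, from which closed range follows. The only difference is that you spell out the Cauchy-sequence argument explicitly, whereas the paper simply asserts the implication.
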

\begin{proof}
From \eqref{1203} and \eqref{equiv}, we have
\begin{align*}
 \| f \|^2_{\dot{H}^{-s}_{ 0}( \Om^c)}
\leq c\|  \tilde f   \|^2_{\dot{H}^{-s}(\R)}
\leq c\| f\|_{\dot{H}^{-s}_{ 0}(\Om^c)}   \| S_{s} f\|_{\dot{H}^s(\Om^c)}.
\end{align*}
This implies that  $  S_{s} : \dot{H}^{-s}_{ 0 }(\Om^c) \ri  \dot{H}^{s}(\Om^c)$
has closed range.
\end{proof}

For the proof of bijectivity of $S_s$, it remains only to show
$S_s$ is onto.  Because of lemma \ref{closedrange}, we have
only to show that $S_s$ has dense range. Let $S^*_s $ be a dual operator of $S_s$,
 $Ker (S^*_s)$ be the kernel of $S^*_s$, $R(S_s)$ be the range of $S_s$
and
 $R(S_s)^{\bot}$ be an orthogonal complement of $R(S_s)$.
 Then there is a relation
$$
Ker (S^*_s ) = R(S_s)^{\bot} = \overline{R(S_s)^{\bot}}.
$$
Hence  $ S_s$ has dense range if and only if $S^*_s$ is
one-to-one.
Suppose that
$S^*_{s} \phi =0$ for some $\phi \in (  \dot{H}^{s}(\Om^c))^* =  \dot{H}^{ -s}_{ 0}(\Om^c) $.
Then, we have
\begin{align*}
0=<< S^*_{s}  \phi, \phi>> = <\phi, S_{s} \phi>
\approx \| \tilde \phi\|^2_{\dot{H}^{-s} (\R)}.
\end{align*}
This implies that $\tilde \phi =0$  and hence $\phi =0$. Therefore,
$S^*_{s} :(  \dot{H}^{s}(\Om^c))^* \ri  (  \dot{H}^{-s}_{ 0 }(\Om^c) )^*$ is one-to-one.

{\bf Uniqueness of solution.}
Suppose that $u \in \dot{H}^s(\R)$ is a weak solution of \eqref{main equation} such that $u =0$ in $\Om^c$.
Then, by \eqref{1013}, we have
\begin{align*}
\int_{\R} |\De^{\frac{s}2} u(x)|^2 dx =
\int_{\R} (2\pi  |\xi|)^{2s} |\hat{u}(\xi)|^2 d \xi =0.
\end{align*}
This means that $supp \, u = \{ 0\}$, that is, $u$ is polynomial. Since $u =0$ in $\Om^c$, we have
$u \equiv 0$ and so we proved the uniqueness of solution.
$\Box$

\section{Proof of Theorem \ref{theo0229-2} }
\setcounter{equation}{0}
To prove the theorem \ref{theo0229-2}, we need the following proposition and lemma.
\begin{prop}\label{prop}
Let $\Om$ be a bounded $C^{1,1}$ domain in $\R$ and $K$ be a
potential defined in \eqref{potentail0313}. Then, there are
positive constants $c_1, \,\, c_2>0$ such that for $x \in \Om$ and
$y \in \Om^c$
\begin{align*}
c_1 \frac{\de^s(x)}{\de^s(y)(\de(y) +1)^s|x-y|^n} \leq K(x,y)
\leq c_2\frac{\de^s(x)}{\de^s(y)(\de(y) +1)^s|x-y|^n},
\end{align*}
where $\de(z) = dist(z,\pa \Om)$ for $z \in \R \setminus \pa \Om$.
\end{prop}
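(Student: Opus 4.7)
My plan is to reduce the pointwise two-sided estimate for $K$ to the known sharp two-sided estimate for the Green function $G_\Om$ of $\De^s$ on bounded $C^{1,1}$ domains, via the Ikeda--Watanabe representation. Concretely, for $x\in\Om$ and $y$ in the interior of $\Om^c$ one has
\begin{align*}
K(x,y) \,=\, c'(n,s)\int_\Om \frac{G_\Om(x,z)}{|z-y|^{n+2s}}\,dz,
\end{align*}
which follows from the L\'evy system of the symmetric $2s$-stable process killed upon exiting $\Om$ (see \cite{B}, \cite{CS}). Combining this with the Kulczycki--Chen--Song estimate
\begin{align*}
G_\Om(x,z) \,\approx\, |x-z|^{2s-n}\min\!\Bigl(1,\ \tfrac{\de^s(x)\de^s(z)}{|x-z|^{2s}}\Bigr),\qquad x,z\in\Om,
\end{align*}
converts the problem into a real-variable estimate of $\int_\Om G_\Om(x,z)|z-y|^{-n-2s}\,dz$ in terms of the three geometric quantities $\de(x)$, $\de(y)$, and $|x-y|$.

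The core step is then a careful decomposition of $\Om$ into three regions according to where $z$ lies: (i) a neighbourhood of $x$ on the scale $\de(x)$, (ii) a neighbourhood of the boundary point closest to $y$ on the scale $\de(y)$, and (iii) the bulk of $\Om$ where $|x-z|\approx|z-y|\approx|x-y|$. In each region one replaces $G_\Om$ and $|z-y|^{-n-2s}$ by their orders of magnitude and evaluates the resulting elementary integral; the contributions combine to produce the target expression $\de^s(x)/[\de^s(y)(\de(y)+1)^s|x-y|^n]$ for both the upper and the lower bound. The lower bound is obtained by restricting integration to a single well-chosen subregion and using the matching lower estimate for $G_\Om$.

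The main obstacle is the regime $\de(y)\ll 1$, where the jumping-kernel factor $|z-y|^{-n-2s}$ concentrates near $\pa\Om$ and the integrand has competing singularities when $z$ is close to $\pa\Om$, to $x$, or to $y$. I plan to handle this regime by flattening $\pa\Om$ near the point of $\pa\Om$ nearest to $y$ using the $C^{1,1}$ graph representation and comparing locally with the half-space, for which the analogous Poisson kernel satisfies $\de(x)^s/(\de(y)^s|x-y|^n)$ by direct computation. The global factor $(\de(y)+1)^s$ then emerges from the far-field regime (iii): when $\de(y)\gtrsim 1$ one has $|z-y|\approx\de(y)+\mathrm{diam}\,\Om\approx 1+\de(y)$ uniformly in $z\in\Om$, and $\int_\Om G_\Om(x,z)\,dz\approx\de^s(x)$, so both sides of the claimed inequality are of order $\de^s(x)/\de(y)^{n+2s}$. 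Matching the local and far-field regimes across the transition $\de(y)\sim 1$ is the delicate bookkeeping point of the argument.
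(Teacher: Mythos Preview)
The paper does not give a proof of this proposition: it simply records it as Theorem~1.5 of Chen--Song~\cite{CS}. Your proposal, by contrast, sketches an actual proof---essentially the one Chen and Song carry out in that reference. The strategy (Ikeda--Watanabe representation of $K$ as $c\int_\Om G_\Om(x,z)\,|z-y|^{-n-2s}\,dz$, the sharp two-sided Green-function bounds on bounded $C^{1,1}$ domains, then a scale-decomposition of the resulting integral) is correct and would yield a self-contained derivation. For the purposes of the present paper, however, only the citation is used; the proposition is invoked as a black box in the proof of the subsequent lemma.
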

(see theorem 1.5 in \cite{CS}).

\begin{lemm}\label{lemma0313}
Let $0 < s <   \frac{n}{2(n-1)}$ and $u$ be $2s$-harmonic in bounded $C^{1,1}$ domain $\Om$.
If $u \in L^{\frac{2n}{n-2s}}(\Om)$, then
$u$ is regular $2s$-harmonic.
\end{lemm}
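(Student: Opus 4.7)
\textbf{Plan for Lemma \ref{lemma0313}.} Fix $x \in \Om$ and take a $C^{1,1}$ exhaustion $\{V_k\}$ of $\Om$ with $x \in V_k \subset \bar V_k \subset V_{k+1} \subset \Om$, $\bigcup_k V_k = \Om$, and $\ep_k := \sup\{\de(y) : y \in \pa V_k\} \downarrow 0$. A concrete choice such as $V_k = \{y \in \Om : \de(y) > 1/k\}$ has uniform $C^{1,1}$ character (for $k$ large, with constants depending only on $\Om$), which will be needed so that Proposition \ref{prop} can be applied to each $V_k$ with $k$-independent constants. For $k$ large enough that $x \in V_k$, $2s$-harmonicity of $u$ gives
\begin{align*}
u(x) = E_x u(X_{\tau_{V_k}}) = \int_{\Om^c} K_{V_k}(x,y) u(y)\,dy + \int_{\Om \setminus V_k} K_{V_k}(x,y) u(y)\,dy =: A_k(x) + B_k(x),
\end{align*}
where $K_{V_k}$ is the $2s$-Poisson kernel of $V_k$. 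My goal is to pass to the limit and obtain $u(x) = \int_{\Om^c} K_\Om(x,y) u(y)\,dy = E_x u(X_{\tau_\Om})$, which is the regular $2s$-harmonicity we want.

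For $A_k(x)$ I would use the monotone convergence $K_{V_k}(x,y) \nearrow K_\Om(x,y)$ as $k\to\infty$ for each fixed $y \in \Om^c$. This is classical for the stable process and visible from the Ikeda--Watanabe representation $K_V(x,y) = \int_V G_V(x,z)\,c(n,s)|z-y|^{-n-2s}\,dz$ together with the monotonicities $G_{V_k}\nearrow G_\Om$ and $V_k\uparrow \Om$. Splitting $u = u^+ - u^-$ and invoking the monotone convergence theorem on each piece yields $A_k(x) \to \int_{\Om^c} K_\Om(x,y) u(y)\,dy$, provided both integrals against $u^{\pm}$ are finite. Finiteness is verified from Proposition \ref{prop} and Hölder's inequality on $\Om^c$, using that the weak solution of Theorem \ref{theo0229} also lies in $L^{\frac{2n}{n-2s}}(\Om^c)$ via the Sobolev embedding $\dot H^s(\R) \hookrightarrow L^{\frac{2n}{n-2s}}(\R)$; the relevant kernel integrability is the same computation used below on $B_k$.

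The heart of the argument, and the source of the restriction $s < \frac{n}{2(n-1)}$, is showing $B_k(x) \to 0$. By Hölder with $p = \frac{2n}{n-2s}$ and $p' = \frac{2n}{n+2s}$,
\begin{align*}
|B_k(x)| \le \|u\|_{L^p(\Om \setminus V_k)} \Big(\int_{\Om \setminus V_k} K_{V_k}(x,y)^{p'}\,dy\Big)^{1/p'}.
\end{align*}
The first factor tends to zero as $k\to\infty$ because $u \in L^p(\Om)$ and $|\Om \setminus V_k| \to 0$. For the second, Proposition \ref{prop} applied to $V_k$ gives, for $y \in \Om \setminus V_k$ with $k$ so large that $\de_{V_k}(y) \le \ep_k \le 1$ and $|x-y| \ge \tfrac12 \de(x)$, the bound $K_{V_k}(x,y) \le C_x\, \de_{V_k}(y)^{-s}$ with $\de_{V_k}(y) := \text{dist}(y, \pa V_k)$. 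Parameterizing the shell $\Om \setminus V_k$ by tubular coordinates $y \leftrightarrow (y',t)$, $y' \in \pa V_k$, $t \in (0,\ep_k)$, Fubini reduces the kernel integral to $C\,|\pa V_k| \int_0^{\ep_k} t^{-sp'}\,dt$, which is finite and tends to zero precisely when $sp' < 1$, i.e.\ exactly when $s < \frac{n}{2(n-1)}$. The main obstacle of the whole proof is this uniform Poisson-kernel estimate on the family $\{V_k\}$: it requires the uniform $C^{1,1}$ character of the exhaustion so that the constants in Proposition \ref{prop} do not degenerate, and it requires the tubular-geometry bookkeeping to turn the kernel power exactly into a boundary weight that is integrable only in the stated sub-critical range of~$s$.
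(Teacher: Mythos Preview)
Your plan is essentially the paper's proof: the same $C^{1,1}$ exhaustion, the same split into an exterior piece and a shell piece over $\Om\setminus V_k$, and the same H\"older estimate against the Poisson--kernel bound of Proposition~\ref{prop}, leading to the integrability condition $s\cdot\frac{2n}{n+2s}<1$, i.e.\ $s<\frac{n}{2(n-1)}$. You are also right to insist on a uniform $C^{1,1}$ exhaustion so that the constants in Proposition~\ref{prop} are $k$--independent; the paper uses this implicitly.

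The one substantive difference is in the exterior term $A_k$. The paper does not use monotone convergence of kernels; it observes probabilistically that on the event $\{X_{\tau_{V_k}}\in\Om^c\}$ one has $\tau_{V_k}=\tau_\Om$, hence
\[
E_x\big[u(X_{\tau_{V_k}});\,X_{\tau_{V_k}}\in\Om^c\big]=E_x\big[u(X_{\tau_\Om});\,X_{\tau_{V_k}}\in\Om^c\big],
\]
and then lets $P^x(X_{\tau_{V_k}}\in\Om^c)\to1$. Your route via $K_{V_k}\nearrow K_\Om$ is fine, but your finiteness check appeals to $u\in L^{\frac{2n}{n-2s}}(\Om^c)$ through the Sobolev embedding for the solution of Theorem~\ref{theo0229}; that is \emph{not} a hypothesis of Lemma~\ref{lemma0313}, which assumes only $u\in L^{\frac{2n}{n-2s}}(\Om)$. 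Replace that step either by the paper's probabilistic identity above, or by the standing condition $\int_{\R}|u(y)|(1+|y|)^{-n-2s}\,dy<\infty$ that is implicit in $u$ being $2s$--harmonic, which already makes $\int_{\Om^c}K_\Om(x,y)|u(y)|\,dy<\infty$ via Proposition~\ref{prop}.
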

\begin{proof}
Let $\Om_k, \,\, 1 \leq k < \infty$ be subset of $\Om$ satisfying
that $\Om_k$ are $C^{1,1}$ domains for all $k$ such that $\bar \Om_k
\subset \Om_{k+1}$ and $\cup_{1\leq k <\infty} \Om_k = \Om$. Let $x
\in \Om$. Then for some $k_0$ such that $x \in \Om_{k_0}$. Note that
$u$ is regular  $\al$-harmonic in $\Om_k$. Hence,  for $k \geq k_0$,
we have
\begin{align*}
u(x) = E_x(u(X_{\tau_{\Om_k}})) & = E_x[u(X_{\tau_{\Om_k}}); X_{\tau_{\Om_k}} \in \Om \setminus \Om_k]
+ E_x[u(X_{\tau_{\Om_k}}); X_{\tau_{\Om_k}} \in  \Om^c]\\
& = E_x[u(X_{\tau_{\Om_k}}); X_{\tau_{\Om_k}} \in \Om \setminus \Om_k]
+ E_x[u(X_{\tau_{\Om}}); X_{\tau_{\Om_k}} \in  \Om^c]
\end{align*}
Here, the semicolon above means as usual that the integration is over the subsequent set.
Since $\tau_{\Om_k} \ri \tau_{\Om}$ a.s and
$\lim_{k \ri \infty} P^x(X_{\tau_{\Om_k}} = X_{\tau_{\Om}}) =1$
(see (5.40) in \cite{B}), the second term goes to
\begin{align*}
E_x[u(X_{\tau_{\Om}}); X_{\tau_{\Om}} \in  \Om^c].
\end{align*}
For the first term, we denote $\de_k(x) = dist(x,\pa \Om_k)$,
$r_k = dist(\pa \Om_k, \pa \Om)$
and $K_k$ is potential defined in
\eqref{potentail0313} replaced by $\Om_k \times (\Om_k)_-$.
Using the  representation of $2s$-harmonic function, proposition \ref{prop} and Holder inequality,
the first term is
\begin{align}\label{equation0313}
\begin{array}{ll}\vspace{2mm}
|\int_{\Om \setminus \Om_k} K_k(x,y) u(y) dy|
& \leq c \int_{\Om \setminus \Om_k}
\frac{\de_k^s(x)}{\de_k^s (y) (\de_k(y) +1)^s |x-y|^n} |u(y)| dy\\ \vspace{2mm}
& \leq c \de_k^s(x) \| u\|_{L^{\frac{2n}{n -2s}}(\Om \setminus \Om_k)}
\Big(\int_{\Om \setminus \Om_k}
\frac{1}{\de_k^{\frac{2n}{n+ 2s} s} (y)  |x-y|^{\frac{2n}{n+2s}n}}
dy\Big)^{\frac{n+2s}{2ns}}\\ \vspace{2mm}
& \leq c \de_k^{-n +s}(x)\| u\|_{L^{\frac{2n}{n -2s}}(\Om)} \Big(\int_0^{r_k}
 r^{-{\frac{2ns}{n+ 2s}   }}  dr\Big)^{\frac{n+2s}{2ns}}\\
& \leq c \de_k^{-n +s}(x)\| u\|_{L^{\frac{2n}{n -2s}}(\Om)}
 r_k^{-1 + \frac{n+ 2s}{2ns}   }.
 \end{array}
\end{align}
Since $\de_k(x) \ri \de(x)$,  $r_k \ri 0$ and $ -1 + \frac{n+ 2s}{2ns}   > 0$, the last term goes to
zero. Hence, for $x \in \Om$, we get
\begin{align*}
u(x) = E_x(u(X_{\tau_\Om}))
\end{align*}
and so $u$ is regular $2s$-harmonic in $\Om$.
\end{proof}

{\bf Proof of Theorem \ref{theo0229-2}.}
In  \eqref{equation0313}, using the Sobolev inequality, we have
\begin{align*}
|\int_{\Om \setminus \Om_k} K_k(x,y) u(y) dy|
 \leq c \de_k^{-n +s}(x)\| u\|_{H^s(\Om)}
 r_k^{-1 + \frac{n+ 2s}{2ns}   } \ri 0 \quad \mbox{as} \quad k \ri \infty.
\end{align*}
Hence, by the process of the proof of Lemma \ref{lemma0313}, we get
$u$ is regular $2s$-harmonic. $\Box$

\end{document}